\pgfplotsset{compat=1.15}
\newtheorem{theorem}{Theorem}[section]
\theoremstyle{definition}
\newtheorem{definition}[theorem]{Definition}
\newtheorem{example}[theorem]{Example}
\newtheorem{remark}[theorem]{Remark}
\newcommand{\R}{\mathbb{R}}
\newcommand{\Lip}{\operatorname{Lip}}
\newcommand{\Li}{\operatorname{Li}}
\newcommand{\supp}{\operatorname{supp}}
\newcommand{\Measp}{\mathcal{M}_{1}(X)}
\newcommand{\notion}[1]{\emph{#1}}
\newcommand{\cl}[1]{\operatorname{cl}\,{#1}}
\newcommand{\inte}[1]{\operatorname{int}\,{#1}}
\begin{document}

\keywords{semiattractor, invariant measure, iterated function system, strongly-fibred IFS.}


\title{Strongly-Fibred Iterated Function Systems and 
the Barnsley--Vince triangle}

\title{Strongly-Fibred Iterated Function Systems and 
the Barnsley--Vince triangle}
\author[K. Le\'{s}niak]{Krzysztof Le\'{s}niak}
\author[N. Snigireva]{Nina Snigireva}
\author[F. Strobin]{Filip Strobin}

\subjclass[2010]{28A80}


\begin{abstract} We review the theory of semiattractors
associated with non-contractive 
Iterated Function Systems (IFSs) and 
demonstrate its applications on a concrete example. 
In particular, we present criteria for 
the existence of semiattractors due to Lasota and Myjak. 
We also discuss the Kieninger criterion 
which allows us to characterise when 
a semiattractor is strongly-fibred. 
Finally, we consider a specific example of 
a non-contractive IFS introduced by Barnsley and Vince. 
We find an invariant measure for this system 
which allows us to describe its semiattractor. 
The difficulty in analysing this IFS stems 
from the fact that it is neither eventually contractive 
nor contractive on average.
\end{abstract}
\maketitle

\section{Introduction} 

 

Iterated function systems (IFS) provide a general framework 
for generating fractals. Traditionally, an IFS
consists of a finite set of Banach contractions 
acting on a complete metric space. Almost from 
the beginning of the modern theory of IFSs, 
introduced by Hutchinson in 1981 and 
later developed by a number of researches (see \cite{LSS-Manual}
and the references therein for an overview), it was 
allowed for a possibility that at least
some maps comprising an IFS were not contractive.
The most widely accepted approach to study IFSs is to switch 
from the dynamics on sets to the dynamics on
measures and consider the IFSs that 
are contractive on average, see for example, \cite{BarnsleyElton}. In this setting, invariant and asymptotically stable measures play the roles of invariant sets and attractors, respectively. Lasota and Myjak provided a  unified framework to study IFSs which are contractive on average by developing the theory of semiattractors, 
see \cite{LasotaMyjak, LasotaMyjak2000}.
Meanwhile, the concept of topological contractivity 
has started to crystalize in various works, see for example \cite{Ed, Kam}, 
culminating in the Kieninger's classification of invariant sets as point-fibred or strongly-fibred, 
see \cite{Kieninger}. Very recently, point-fibred minimal invariant
sets turned out to be attractors 
of IFSs consisting of weak contractions 
(which are not very far from Banach contractions, 
see for instance \cite{BKNNS, Miculescu}). On the other hand, strongly-fibred minimal invariant sets
that are not point-fibred 
cannot be attractors of (weakly) contractive IFSs. However, such sets can be attractors of IFSs 
for which the chaos game algorithm works
(see \cite{BL, BarrientosEtAl2017, BarrientosEtAl2020}). In order to avoid restricting the whole space on which an IFS acts to an invariant set, one often needs to extend the concept of an attractor. A proper natural extension is the concept of a semiattractor,
as illustrated in this paper by analysing an example of a strongly-fibred triangle
introduced by Barnsley and Vince, see \cite{BV2013}.

\section{Preliminaries}

In this section we will introduce the notion 
of an iterated function system (IFS)
together with its probabilistic extension. 
We will also define a strict attractor, a semiattractor,
and an invariant probability measure of an IFS. 

Throughout this paper $X$ stands for a Polish space. 
However, let us note that many results stated here, 
namely those which do not involve measures and metrics, 
are valid for general topological spaces. 
In that setting one only needs to replace Hausdorff 
convergence with Vietoris convergence; e.g., \cite{BL}.

The Lipschitz constant of $f:X\to X$ is denoted by $\Lip(f)$.
The \notion{lower Kuratowski limit} of a sequence of sets
$S_n\subseteq X$, $n\geq 1$, is the set
\begin{align*}
	\Li S_n = \{y\in X: y=\lim_{n\to\infty} x_n 
	\;\mbox{for some sequence}\; x_n\in S_n\}; 
\end{align*}
see \cite{Beer}.
By $\Measp$ we denote the space of Borel probability measures
$\mu$ on $X$. The support of $\mu$ is denoted by $\supp \mu$.

An \notion{iterated function system} (IFS) $\mathcal{F}$ 
is a system of continuous maps $f_i:X\to X$, $i\in I$, 
where $I$ is a finite indexing set. 
We write $\mathcal{F}=(X; f_i:i\in I)$. 
The associated \notion{Hutchinson operator} 
$F: 2^X\to 2^X$ is given by 
\begin{align*}
	F(S)=\cl{\bigcup_{i\in I} f_i(S)}
\end{align*} 
for $S\subseteq X$, 
where $2^X$ stands for the power set of $X$ 
and $\cl{\!}$ denotes the topological closure of a set. 
To simplify notation we write $F(x)$ instead of $F(\{x\})$.

A \notion{probabilistic IFS} 
$\mathcal{F}p = (X; (f_i,p_i):i\in I)$ 
is the IFS $\mathcal{F}=(X; f_i:i\in I)$ 
together with a vector of probabilities 
$(p_i)_{i\in I}$, $p_i\in [0,1]$, 
$\sum_{i\in I} p_i = 1$.
The associated \notion{Markov operator}
$M:\Measp\to\Measp$ is given by 
\begin{align*}
	M\mu =  \sum_{i\in I} p_i\cdot \mu\circ{f_i}^{-1}
\end{align*}
for ${\mu\in\Measp}$, where $\mu\circ{f_i}^{-1}$ 
is the push-forward measure through $f_i$. 

We are now ready to define 
a strict attractor (\cite{BL, BV2013}) 
and a semiattractor (\cite{LasotaMyjak2000, MyjakSzarek}).

\begin{definition}
Let $\mathcal{F}=(X; f_i:i\in I)$ be an IFS and 
$F$ the associated Hutchinson operator; 
$F^n$ stands below for the $n$-fold composition of $F$.
A nonempty closed set $A_{*}\subseteq X$ is 
\begin{itemize}
	\item[(i)] an \notion{invariant set} 
	if $F(A_{*})=A_{*}$; 
	\item[(ii)]	a \notion{minimal invariant set} 
	if $A_{*}$ is an invariant set and 
	there is no nonempty closed invariant set $C$ that 
	is a proper subset of $ A_{*}$, 
	i.e., $C\subsetneq A_{*}$;
	\item[(iii)] a \notion{strict attractor} if
	there exists an open $U\supseteq A_{*}$ such that
\begin{equation}\label{eq:strictattractor}
	F^n(S) \to A_{*} 
	\;\text{ for all nonempty compact }\;
	S\subseteq U 
\end{equation}
	where the convergence takes place 
	with respect to the Hausdorrf metric
	(see \cite{Beer, Mendivil});
	the largest open $U$ in \eqref{eq:strictattractor}
	is called the \notion{basin} of $A_{*}$
	and denoted by $\mathcal{B}(A_{*})$;  
	\item[(iv)] a \notion{semiattractor}, when 
	$\bigcap_{x\in X} \Li F^n(x) = A_{*}$.
\end{itemize} 
\end{definition}

By an attractor (in a given sense) of 
the probabilistic IFS $\mathcal{F}p$, 
we understand an attractor of 
the underlying IFS $\mathcal{F}$, 
regardless of the choice of probabilities. 
It is worth noting that, in general, 
the semiattractor may be unbounded, 
see \cite[Example 6.2]{LasotaMyjak}.

The relation between various kinds of attractors 
is summarized in the next theorem.

\begin{theorem}[\cite{MyjakSzarek, LasotaMyjak2000,%
		BLR2016}]\label{th:relationsbetweenattractors}
Let $\mathcal{F}=(X; f_i:i\in I)$ be an IFS and 
$A_*\subseteq X$ a nonempty closed set.
\begin{itemize}
	\item[(i)] A strict attractor $A_{*}$ is 
	a compact invariant set which is unique 
	within its basin $\mathcal{B}(A_{*})$.
	\item[(ii)] A semiattractor $A_{*}$ of $\mathcal{F}$ is 
	the smallest invariant set, i.e., $A_{*}\subseteq C$ for 
	every nonempty closed invariant set $C$. Even more,
	$A_{*}\subseteq C$ for every nonempty closed set $C$
	with $F(C)\subseteq C$.
	\item[(iii)] If $A_{*}$ is a strict attractor 
	with basin $\mathcal{B}(A_{*})$, then 
	$F(\mathcal{B}(A_{*}))\subseteq \mathcal{B}(A_{*})$
	and $A_{*}$ is a compact semiattractor
	of the restricted IFS 
	$\mathcal{F}|{\mathcal{B}(A_{*})}= 
	(\mathcal{B}(A_{*}); {f_{i}|}_{\mathcal{B}(A_{*})}:i \in I)$.
	\item[(iv)] If $A_{*}$ is a compact semiattractor 
	of $\mathcal{F}$, then $A_{*}$ is a strict attractor 
	of the restricted IFS 
	$\mathcal{F}|{A_{*}}= (A_{*}; {f_{i}|}_{A_{*}}:i \in I)$.
\end{itemize}
\end{theorem}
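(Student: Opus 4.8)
My plan is to treat the four assertions as one package resting on a shared toolkit: the monotonicity of the Hutchinson operator $F$ with respect to inclusion, its continuity as a self-map of the space $\mathcal{K}(X)$ of nonempty compact subsets of $X$ equipped with the Hausdorff metric $h$ (a consequence of the continuity of the maps $f_i$), the completeness of $(\mathcal{K}(X),h)$ when $h$ is built from a fixed complete metric inducing the topology of $X$, and the standard comparison between Hausdorff and Kuratowski convergence: $S_n\to A$ in $h$ forces $\Li S_n=A$ and forces every cluster point of a sequence $x_n\in S_n$ to lie in $A$, while, conversely, for sets contained in a fixed compactum these two facts yield $S_n\to A$ in $h$. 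Every item is classical, so I would carry out the genuinely elementary steps and point to \cite{MyjakSzarek,LasotaMyjak2000,BLR2016} for the remaining machinery. Throughout I use the paper's convention $F(x)=F(\{x\})$, noting that then each $F^n(x)$ is a finite set.

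For (i) I would fix $x\in A_*$; the iterates $F^n(x)$ are finite, hence lie in $\mathcal{K}(X)$, and they form a Cauchy sequence there, so by completeness $A_*=\lim_n F^n(x)$ is compact. Applying the continuous map $F$ to $F^n(x)\to A_*$ gives $F^{n+1}(x)\to F(A_*)$, while $F^{n+1}(x)\to A_*$ by hypothesis, and uniqueness of limits gives $F(A_*)=A_*$. For the uniqueness clause I would take an arbitrary nonempty compact invariant set $A'\subseteq\mathcal{B}(A_*)$ — for instance a second strict attractor whose basin lies in $\mathcal{B}(A_*)$ — and note that $F^n(A')=A'$ for all $n$ whereas $F^n(A')\to A_*$, forcing $A'=A_*$.

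For (ii) the heart is the last assertion. If $C$ is nonempty, closed, and $F(C)\subseteq C$, then $F^n(C)\subseteq C$ for every $n$ by monotonicity, so for any $x\in C$ we have $F^n(x)\subseteq C$, hence $\Li F^n(x)\subseteq C$ since $C$ is closed, whence
\[
A_*=\bigcap_{y\in X}\Li F^n(y)\ \subseteq\ \Li F^n(x)\ \subseteq\ C .
\]
It remains to see that $A_*$ is itself invariant. First $F(A_*)\subseteq A_*$: given $y\in A_*$, $i\in I$ and an arbitrary $x\in X$, write $y=\lim_n y_n$ with $y_n\in F^n(x)$; then $f_i(y)=\lim_n f_i(y_n)$ with $f_i(y_n)\in F^{n+1}(x)$, so $f_i(y)\in\Li F^n(x)$, and since $x$ is arbitrary, $f_i(y)\in A_*$ — closedness of $A_*$ then gives $F(A_*)\subseteq A_*$. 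Applying the displayed inclusion to the nonempty closed set $C:=F(A_*)$, which satisfies $F(C)=F^2(A_*)\subseteq F(A_*)=C$, yields $A_*\subseteq F(A_*)$, so $F(A_*)=A_*$; as every invariant set trivially satisfies $F(C)\subseteq C$, this shows $A_*$ is the smallest invariant set.

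For (iii) and (iv) I would use that $A_*$ is compact inside the open basin, so some $\varepsilon$-neighbourhood $U_0$ of $A_*$ lies in $\mathcal{B}(A_*)$, and that $\mathcal{B}(A_*)=\{x:F^n(x)\to A_*\}$ (the inclusion $\subseteq$ is the choice $S=\{x\}$; for $\supseteq$, such an $x$ has an iterate $F^{n_0}(x)\subseteq U_0$, and continuity of $F^{n_0}$ pushes a whole neighbourhood of $x$ into $U_0\subseteq\mathcal{B}(A_*)$, so that neighbourhood lies in the basin). Forward invariance $F(\mathcal{B}(A_*))\subseteq\mathcal{B}(A_*)$ — which is what makes $\mathcal{F}|_{\mathcal{B}(A_*)}$ a legitimate IFS — I would take from \cite{BLR2016}; its mechanism is that for $x$ with $F^n(x)\to A_*$ and $i\in I$ the set $F^n(f_i(x))\subseteq F^{n+1}(x)$ is eventually a compact subset of $\mathcal{B}(A_*)$, so $F^n(f_i(x))\to A_*$, and the same absorbing property handles the closure built into $F$. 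When we restrict attention to $\mathcal{B}(A_*)$, every singleton orbit converges to $A_*$, so $\Li F^n(x)=A_*$ for each $x\in\mathcal{B}(A_*)$ and the intersection over these $x$ is $A_*$; with $A_*$ compact this is (iii). Conversely, for (iv) a compact semiattractor $A_*$ is invariant by (ii), so $\mathcal{F}|_{A_*}$ is well defined; for $x\in A_*$ we have $F^n(x)\subseteq A_*$, so $A_*=\bigcap_{y\in X}\Li F^n(y)\subseteq\Li F^n(x)\subseteq A_*$ forces $\Li F^n(x)=A_*$, and then for any nonempty compact $S\subseteq A_*$ and any $x\in S$ we get $A_*=\Li F^n(x)\subseteq\Li F^n(S)$ while $F^n(S)\subseteq F^n(A_*)=A_*$, so $F^n(S)$ Kuratowski-converges to $A_*$ inside the compactum $A_*$, hence $F^n(S)\to A_*$ in $h$; thus $A_*$ is a strict attractor of $\mathcal{F}|_{A_*}$ with basin all of $A_*$. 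The step I expect to fight hardest with is not any isolated estimate but the disciplined bookkeeping among the three convergence notions, the most delicate point being the forward invariance of the basin in (iii): the closure in the definition of $F$ means one must rule out $F(\mathcal{B}(A_*))$ meeting the boundary of the basin, and that is exactly where the compactness of $A_*$ and the absorbing character of $\mathcal{B}(A_*)$ earn their keep.
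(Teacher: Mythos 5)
The paper does not prove this theorem at all: it is imported verbatim from \cite{MyjakSzarek}, \cite{LasotaMyjak2000} and \cite{BLR2016}, so there is no internal argument to compare against. Measured against the standard proofs in those sources, your treatment of (i), (ii) and (iv) is correct and is essentially the canonical one: compactness and invariance of a strict attractor via completeness of $(\mathcal{K}(X),h)$ and $h$-continuity of $F$ on compacta; the ``smallest invariant set'' property of a semiattractor via $F^n(x)\subseteq C$ for $x\in C$ plus closedness of $C$, with invariance of $A_*$ obtained by the neat trick of applying that inclusion to $C=F(A_*)$; and (iv) via $\Li F^n(x)=A_*$ for $x\in A_*$ together with the fact that Kuratowski lower convergence inside the compactum $A_*$ upgrades to Hausdorff convergence. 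The characterisation $\mathcal{B}(A_*)=\{x:F^n(x)\to A_*\}$ and the derivation of the semiattractor property in (iii) are also fine.

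The one place where you defer to a citation rather than argue --- forward invariance of the basin --- is also the only genuinely delicate point, and you should be aware that with this paper's closure-including Hutchinson operator the literal inclusion $F(\mathcal{B}(A_*))\subseteq\mathcal{B}(A_*)$ can actually fail: for the single-map system $f(x)=x^2$ on $\R$ the strict attractor is $\{0\}$ with basin $(-1,1)$, yet $F((-1,1))=\cl{[0,1)}=[0,1]\not\subseteq(-1,1)$. Your absorption mechanism ($F^n(f_i(x))\subseteq F^{n+1}(x)$ is eventually a compact subset of an $\varepsilon$-neighbourhood of $A_*$ inside the basin) correctly proves $f_i(\mathcal{B}(A_*))\subseteq\mathcal{B}(A_*)$ for each $i$, which is the form of forward invariance actually established in \cite{BL,BLR2016} and all that is needed to make the restricted IFS in (iii) well defined; but your closing hope that compactness of $A_*$ rules out the closure meeting the boundary of the basin is not realisable in general. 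So the substantive content of your proposal is sound; the residual issue lies in the statement's use of the closure-including $F$ on the open basin rather than in your argument.
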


Let us now introduce the notion of 
an invariant measure for an IFS (see 
\cite{LasotaMackey, LasotaMyjak, LasotaMyjak2000, Mendivil, Zaharopol}).

\begin{definition}
Let $\mathcal{F}p$ be a probabilistic IFS and 
$M$ the associated Markov operator; 
$M^n$ stands below for the $n$-fold composition of $M$.
We say that a probabilistic measure $\mu_{*}\in\Measp$ is 
\begin{itemize}
	\item[(i)] an \notion{invariant measure}, 
	when $M \mu_{*}=\mu_{*}$,
	\item[(ii)] an \notion{asymptotically stable measure}, 
	when $M^n \mu \to \mu_{*}$
	for every probabilistic measure $\mu\in\Measp$,
	where $\to$ denotes the weak convergence
	of measures (see \cite{Billingsley}).
\end{itemize}
In case (ii) we say that $M$ itself is 
\notion{asymptotically stable}.
\end{definition}

Observe that any asymptotically stable measure 
is a unique invariant measure.
The reason for this is that Markov operators induced 
by probabilistic IFSs are weakly continuous, 
see \cite[Section 3 p.483]{MyjakSzarek}.

\section{Existence of Semiattractors}\label{sec:semiattractors}

In this section we provide two existence criteria 
for semiattractors using the higher iterates of an IFS. 
The first criterion is purely topological one.

\begin{theorem}\label{semiattractor}
Let $\mathcal{F}=(X; f_i: i\in I)$ be an IFS. 
If, for some positive integer $k$, 
$A_{*}$ is a semiattractor of 
$\mathcal{F}^k = (X; f_{\underline{i}}:
\underline{i}\in I^k)$, then 
$A_{*}$ is a semiattractor of $\mathcal{F}$.
\end{theorem}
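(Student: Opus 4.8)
The plan is to establish the two inclusions $\bigcap_{x\in X}\Li F^n(x)\subseteq A_{*}$ and $A_{*}\subseteq\bigcap_{x\in X}\Li F^n(x)$ separately. The only tools I use are the monotonicity of the Hutchinson operator (if $S\subseteq T$ then $F(S)\subseteq F(T)$, hence $(F^k)^n(S)\subseteq(F^k)^n(T)$), the identity $F^{kn+r}=(F^k)^n\circ F^r$ for $0\le r<k$, and the elementary behaviour of lower Kuratowski limits under passage to a subsequence. In fact no metric or continuity input beyond what is already packaged into the definitions is needed: the argument is purely a bookkeeping of $\Li$ along the arithmetic progressions $\{kn+r:n\ge1\}$.

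For the inclusion ``$\subseteq$'': fix $x\in X$. Since $(F^{kn}(x))_{n\ge1}$ is a subsequence of $(F^m(x))_{m\ge1}$, any sequence $z_m\in F^m(x)$ with $z_m\to a$ restricts to a sequence $z_{kn}\in F^{kn}(x)$ with $z_{kn}\to a$; hence $\Li_m F^m(x)\subseteq\Li_n F^{kn}(x)=\Li_n(F^k)^n(x)$. Intersecting over $x\in X$ and using that $A_{*}$ is the semiattractor of $\mathcal F^k$ yields $\bigcap_{x\in X}\Li F^n(x)\subseteq\bigcap_{x\in X}\Li_n(F^k)^n(x)=A_{*}$.

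For the inclusion ``$\supseteq$'': fix $x\in X$ and $a\in A_{*}$; I will build a sequence $w_m\in F^m(x)$ with $w_m\to a$. It suffices to find, for each residue $r\in\{0,\dots,k-1\}$, a sequence $z^{(r)}_n\in F^{kn+r}(x)$ with $z^{(r)}_n\to a$, because then putting $w_m:=z^{(r)}_n$ for $m=kn+r$ interleaves the $k$ convergent subsequences into one. To obtain the $r$-th sequence I pick any point $y\in F^r(x)$ (this set is nonempty, since the $f_i$ are everywhere defined). Monotonicity of $(F^k)^n$ applied to $\{y\}\subseteq F^r(x)$ gives $(F^k)^n(y)\subseteq(F^k)^n(F^r(x))=F^{kn+r}(x)$. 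Since $A_{*}$ is the semiattractor of $\mathcal F^k$, we have in particular $a\in A_{*}\subseteq\Li_n(F^k)^n(y)$, so there exist $z^{(r)}_n\in(F^k)^n(y)=F^{kn}(y)\subseteq F^{kn+r}(x)$ with $z^{(r)}_n\to a$, as desired. Hence $a\in\Li F^n(x)$ for every $x\in X$, which gives $A_{*}\subseteq\bigcap_{x\in X}\Li F^n(x)$, and combining this with the first inclusion finishes the proof.

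The one step I expect to need care is the conclusion of the second inclusion: the hypothesis controls only the sparse limits $\Li_n(F^k)^n(\cdot)$, so to reach the residue class $r$ one must invoke it not at $x$ but at an auxiliary seed $y\in F^r(x)$; this is exactly what spreads convergence along the multiples of $k$ over every residue and lets one reassemble convergence along the full sequence $(F^m(x))_m$. (Alternatively, one could first deduce from Theorem \ref{th:relationsbetweenattractors}(ii) applied to $\mathcal F^k$ that $F(A_{*})=A_{*}$, since $F(A_{*})$ is a closed $\mathcal F^k$-invariant set and $A_{*}\subseteq F(A_{*})\subseteq\cdots\subseteq F^k(A_{*})=A_{*}$; but the argument above does not use invariance.)
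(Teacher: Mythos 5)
Your proof is correct and follows essentially the same route as the paper: the easy inclusion $\bigcap_{x}\Li F^n(x)\subseteq A_{*}$ via passing to the subsequence of multiples of $k$, and the reverse inclusion by seeding the $\mathcal{F}^k$-limit at an auxiliary point of $F^r(x)$ for each residue $r$ and interleaving the resulting $k$ convergent subsequences. The only (immaterial) difference is that the paper takes the specific seed $y=f_1^{r}(x)$ where you allow an arbitrary $y\in F^r(x)$.
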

\begin{proof}
Let us first show that 
\begin{align}\label{eq:AstarIsSubset}
	\bigcap_{x\in X}\Li F^n(x)\supseteq 
	A_{*} \neq\emptyset.
\end{align}
Denote by $G$ the Hutchinson operator induced by 
the IFS $\mathcal{F}^k$. Then $G=F^k$. 
Let $z\in A_{*}=\bigcap_{x\in X}\Li G^n(x)$.
Fix also $x\in X$. For every $0\leq j\leq k-1$ 
there exists a sequence
\begin{align*}
	y_{n}^{(j)}\in G^n(y), 
	\;\text{where}\; 
	y=f_{1}^j(x),
\end{align*}
such that $z=\lim_{n\to\infty} y_{n}^{(j)}$.
(Conventionally $f_{1}^0(x)=x$.) Putting
\begin{align*}
	x_{nk+j} := y_{n}^{(j)}\in F^{nk+j}(x)
	\;\text{for}\; 
	0\leq j\leq k-1, n\geq 1,
\end{align*}
yields $z=\lim_{n\to\infty} x_n$, $x_n\in F^n(x)$.
Therefore $z\in\Li F^n(x)$. Being $x\in X$ arbitrary,
$z\in \bigcap_{x\in X}\Li F^n(x)$.
	
At this stage, by \eqref{eq:AstarIsSubset},
we know that $\mathcal{F}$ admits a semiattractor.
To finish the proof it is enough to 
prove the inclusion that is opposite 
to \eqref{eq:AstarIsSubset}. Indeed, we have 	
\begin{align*}
	\bigcap_{x\in X} \Li F^n(x)
	\subseteq 
	\bigcap_{x\in X} \Li F^{kn}(x)
	\subseteq 
	\bigcap_{x\in X} \Li G^n(x) 
	= A_{*}.
\end{align*} 
\end{proof}

Using the above theorem, we can now improve 
the Lasota--Myjak criterion 
\cite[Theorem 6.1]{LasotaMyjak2000}
for the existence 
of semiattractors and asymptotically stable measures.

\begin{theorem}[Higher iterate Lasota--Myjak criterion]\label{th:criterion3}
Let $\mathcal{F}p =(X; (f_i, p_i):i\in I)$ 
be a probabilistic IFS with positive probabilities $p_i>0$
that consists of Lipschitz maps $f_i$.
Let	$\mathcal{F}p^k =(X; (f_{\underline{i}}, p_{\underline{i}}):
\underline{i}\in I^k)$, $k\geq 1$, be its $k$th iterate,
where $\underline{i}=(i_{1},\dots,i_{k})$,
$f_{\underline{i}} = f_{i_{1}} \circ\dots\circ f_{i_{k}}$,
$p_{\underline{i}} = \prod_{l=1}^{k} p_{i_{l}}$.
Suppose that $\mathcal{F}p^k$ is average contractive, i.e.,
\begin{align*}
	\sum_{\underline{i}\in I^k} p_{\underline{i}} 
	\cdot \Lip(f_{\underline{i}}) <1.
\end{align*}
Then we have
\begin{enumerate}
	\item[(a)] $\mathcal{F}p$ and $\mathcal{F}p^k$ 
	admit semiattractors and these semiattractors coincide;
	\item[(b)] the Markov operator induced by $\mathcal{F}p^k$
	coincides with $M^k$ where $M$ is the Markov operator 
	induced by $\mathcal{F}p$ and both operators 
	are asymptotically stable and share 
	the same unique invariant measure;
	moreover, the invariant measure 
	supports the semiattractors	in (a).	 
\end{enumerate}
\end{theorem}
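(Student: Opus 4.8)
The plan is to derive the theorem from the classical Lasota–Myjak criterion \cite[Theorem 6.1]{LasotaMyjak2000} applied to the single IFS $\mathcal{F}p^k$, and then to transfer the conclusions back to $\mathcal{F}p$ using Theorem~\ref{semiattractor} together with an elementary computation comparing the Markov operators. First I would observe that the hypothesis $\sum_{\underline{i}\in I^k} p_{\underline{i}}\Lip(f_{\underline{i}})<1$ is precisely the average-contractivity assumption of the Lasota–Myjak criterion for the probabilistic IFS $\mathcal{F}p^k$ (note each $p_{\underline{i}}=\prod_l p_{i_l}>0$ and $\sum_{\underline{i}\in I^k}p_{\underline{i}}=1$, since $(p_i)$ is a probability vector). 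Hence that criterion applies verbatim to $\mathcal{F}p^k$ and yields: $\mathcal{F}p^k$ admits a semiattractor $A_*$, the Markov operator $M_k$ induced by $\mathcal{F}p^k$ is asymptotically stable with a unique invariant measure $\mu_*$, and $\supp\mu_*=A_*$. This disposes of the statements in (a) and (b) concerning $\mathcal{F}p^k$ alone.

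Next I would establish the identity $M_k=M^k$, where $M$ is the Markov operator of $\mathcal{F}p$. This is a routine unwinding of definitions: for $\mu\in\Measp$,
\begin{align*}
	M^k\mu = \sum_{i_1\in I}\cdots\sum_{i_k\in I}
	p_{i_1}\cdots p_{i_k}\,\mu\circ f_{i_k}^{-1}\circ\cdots\circ f_{i_1}^{-1}
	= \sum_{\underline{i}\in I^k} p_{\underline{i}}\,\mu\circ f_{\underline{i}}^{-1} = M_k\mu,
\end{align*}
using $f_{\underline{i}}=f_{i_1}\circ\cdots\circ f_{i_k}$ and the fact that push-forward under a composition is the composition of push-forwards, applied $k$ times by induction. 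With this identity in hand, $M^k=M_k$ is asymptotically stable, so $M^{kn}\mu\to\mu_*$ for every $\mu$.

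Then I would upgrade asymptotic stability of $M^k$ to asymptotic stability of $M$ itself. The standard trick: since $M$ is weakly continuous (as noted after the definition of invariant measure) and $M^k$ is a weak contraction of the appropriate Fortet–Mourier/Wasserstein type — more precisely, since $\mathcal{F}p^k$ is average contractive the Lasota–Myjak theory gives that $M^k$ contracts some complete metric $d$ on $\Measp$ (or on a suitable subset) inducing the weak topology — one splits the orbit $M^n\mu$ along residues $n\equiv j\pmod k$. For each fixed $j$, $M^{kn+j}\mu = M^{kn}(M^j\mu)\to\mu_*$ by asymptotic stability of $M^k$ applied to $M^j\mu\in\Measp$. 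Since there are only finitely many residues $j\in\{0,\dots,k-1\}$ and all $k$ subsequences converge to the same limit $\mu_*$, the full sequence $M^n\mu\to\mu_*$, so $M$ is asymptotically stable; in particular $\mu_*$ is also the unique invariant measure of $M$ (uniqueness follows since an invariant measure of $M$ is invariant for $M^k$). For the semiattractor claims: $\mathcal{F}p^k$ (equivalently its underlying IFS $\mathcal{F}^k$) has semiattractor $A_*$, so by Theorem~\ref{semiattractor} the IFS $\mathcal{F}$ also has a semiattractor, and the proof of that theorem shows the two semiattractors coincide (the displayed chain of inclusions there gives $\bigcap_x\Li F^n(x)=\bigcap_x\Li G^n(x)=A_*$). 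Finally $\supp\mu_*=A_*$ was already given by the Lasota–Myjak criterion for $\mathcal{F}p^k$, and since the semiattractors agree, $\mu_*$ supports the semiattractor of $\mathcal{F}p$ as well.

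The main obstacle is the passage from asymptotic stability of the iterate $M^k$ to that of $M$. While the residue-class argument is morally clear, making it rigorous requires knowing in what sense $M^k$ is "contractive" — the Lasota–Myjak framework provides a complete metric (a Fortet–Mourier-type distance, possibly restricted to measures with suitable moment bounds) on which $M^k$ acts as a Banach contraction, and one must check that $M$ maps this space into itself and is continuous there, so that the finitely-many convergent subsequences indeed assemble into convergence of the whole orbit in the weak topology. This is where I would have to be careful to invoke the precise statement of \cite[Theorem 6.1]{LasotaMyjak2000} rather than a vague paraphrase; everything else (the $M_k=M^k$ identity, the coincidence of semiattractors via Theorem~\ref{semiattractor}, and the support statement) is essentially bookkeeping.
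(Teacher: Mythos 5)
Your proposal is correct and follows essentially the same route as the paper: apply the classical Lasota--Myjak criterion to $\mathcal{F}p^k$, identify its Markov operator with $M^k$, transfer the semiattractor back via Theorem~\ref{semiattractor}, and pass from asymptotic stability of $M^k$ to that of $M$ by the standard higher-iterate trick. Your closing worry is unnecessary: the residue-class argument you give is already complete, since a sequence partitioned into finitely many subsequences all converging weakly to the same limit $\mu_*$ converges to $\mu_*$, with no need to exhibit the underlying contraction metric.
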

\begin{proof}
Part (a). 
By the average contractivity, 
there exists at least one $\underline{i}\in I^k$
such that $\Lip(f_{\underline{i}}) <1$.
Then by \cite[Theorem 3.2]{LasotaMyjak2000} 
(which can be also found as \cite[Theorem 6.3]{MyjakSzarek})
we know that $\mathcal{F}p^k$ admits a semiattractor.
(Note that we could have used 
\cite[Theorem 6.1]{LasotaMyjak2000} as well.)
This semiattractor is also the semiattractor of 
$\mathcal{F}p$ according to Theorem \ref{semiattractor}.

Part (b). 
It is readily seen that $M^k$ is the Markov operator
of $\mathcal{F}p^k$, when $M$ is the Markov operator
of $\mathcal{F}p$. 
Further, by the average contractivity,
$M^k$ is asymptotically stable according to
\cite[Proposition 12.8.1]{LasotaMackey} 
(which can be also found as \cite[Fact 3.2]{MyjakSzarek}).
Thus $M^k$ admits a unique invariant measure $\mu_{*}$
supporting the semiattractor of $\mathcal{F}p^k$,
see \cite[Theorem 6.1]{LasotaMyjak2000}. 
Hence $M$ is asymptotically stable with $\mu_{*}$
as its invariant measure, due to the standard trick 
concerning contractive fixed points of higher iterates,
see \cite[Remark 1.2(3)]{LSS-Manual}. 
As before, $\mu_{*}$ supports the semiattractor
of $\mathcal{F}p$.
\end{proof}

\begin{remark}
Part (a) of Theorem \ref{th:criterion3} can be deduced
from the proof of Part (b). 
\end{remark}

Later on we will apply the above criterion 
when $X=\R^2$ and the maps comprising an IFS are affine. 
Therefore, let us recall how one can compute 
the Lipschitz constant $\Lip(f)$ 
for an affine map $f$ on $\R^d$ 
equipped with the $2$-norm, $\|\cdot\|_2$.
If $f(x)=A\, x+B$ for $x\in\R^d$, then
\begin{align*}
	\Lip(f) 
	=\sup_{x\neq y} 
	\frac{\|f(x)- f(y)\|_2}{\|x- y\|_2}
	=\|A\|_2 
	=\sigma_{\max}(A),
\end{align*}
where $\sigma_{\max}(A)$ is 
the maximum singular value of $A$, 
see, for example, \cite[Chap. 5.2]{Meyer}.

\section{Strongly-fibred IFS}

We introduce the classification of invariant sets of IFSs
due to B. Kieninger, see \cite{Kieninger, BV2013}. 

Let $A_*\subseteq X$ be a compact minimal invariant set 
(strict attractor, semiattractor) of an IFS $\mathcal{F}$. 
We define a \notion{coding map} 
$\pi: I^{\infty}\to 2^{A_{*}}$ by the formula 
\begin{align}\label{eq:fiber}
	\pi(i_{1}i_{2}...) = 
	\bigcap_{n=1}^{\infty} 
	f_{i_1}\circ\dots\circ f_{i_n}(A_{*})
	\;\mbox{ for }\; 
	(i_n)_{n=1}^{\infty}\in I^{\infty}.
\end{align}
A set $\pi(i_{1}i_{2}\dots)$ is called 
a \notion{fibre} of $A_{*}$ with the 
\notion{address} $i_{1}i_{2}\dots$; 
it is a nonempty compact subset of $A_{*}$.
In consequence, $A_{*}$ is a union of fibres,
\begin{align*}
	A_{*} = \pi(I^{\infty}) = 
	\bigcup_{(i_n)_{n=1}^{\infty}\in I^{\infty}}
	\pi(i_{1}i_{2}\dots).
\end{align*}

Let us remark that if $A_*$ is 
a strict attractor of $\mathcal{F}$ 
and we replace in \eqref{eq:fiber} $A_*$ 
with any compact set $C\supseteq A_*$ such that 
$F(C)\subseteq C\subseteq \mathcal{B}(A_{*})$, 
then the resulting set will coincide 
with the fibre as defined in \eqref{eq:fiber}, 
see \cite[Proposition 1]{BL}. 
Note that in \cite{Kieninger}, 
the author considers more general framework and 
he takes $C=X$ where $X$ is a compact Hausdorff space.

\begin{definition}
We say that a compact minimal invariant set $A_{*}$ 
of an IFS $\mathcal{F}$ is: 
\begin{itemize}
	\item[(i)] \notion{point-fibred}, 
	if all its fibres are singletons; 
	\item[(ii)] \notion{strongly-fibred}, 
	if for every $a\in A_{*}$ 
	and open $V\ni a$ there exists 
	$(i_n)_{n=1}^{\infty}\in I^{\infty}$
	such that $\pi(i_{1}i_{2}...)\subseteq V$.
\end{itemize}
\end{definition}  

From the above definition, it is clear 
that a minimal invariant set which is point-fibred, 
is also strongly-fibred. Further, let us note that 
if an IFS is weakly contractive, then its strict attractor
is point-fibred. And vice-versa, 
if $A_{*}$ is a point-fibred minimal invariant
set of the IFS $\mathcal{F}$, then $\mathcal{F}$
can be remetrized to an IFS of weakly contractive 
maps on $A_{*}$, see \cite{BKNNS, Miculescu}. 
For a systematic account of basic types of 
weakly contractive IFSs we refer the reader 
to \cite{LSS-Manual}.  
An IFS with a strongly-fibred but 
not point-fibred minimal invariant set
is not weakly contractive under any remetrization. 
For recent generalizations of strong-fibredness 
we refer the reader to 
\cite{BarrientosEtAl2017} and \cite{Diaz2020}.

\begin{theorem}[The Kieninger criterion]\label{th:criterionK}
Let $\mathcal{F} = (X; f_i : i\in I)$ be an IFS.
Let $A_{*}$ be either a compact semiattractor or 
a strict attractor of $\mathcal{F}$. 
Suppose that $A_{*}$ admits a singleton
fibre, i.e., there exists 
$(i_n)_{n=1}^{\infty}\in I^{\infty}$
such that $\pi(i_{1}i_{2}\dots)=\{a_{*}\}$
for some $a_{*}\in A_{*}$. 
Then $A_{*}$ is strongly-fibred. 
\end{theorem}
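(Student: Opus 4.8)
The plan is to show that the \emph{singleton} fibres of $A_*$ are dense in $A_*$; once that is known, strong-fibredness is immediate. Throughout, fix a metric $d$ compatible with the Polish topology of $X$, and for a finite word $\underline j=(j_1,\dots,j_m)$ over $I$ write $f_{\underline j}=f_{j_1}\circ\dots\circ f_{j_m}$ (with $f_{\underline j}=\mathrm{id}$ for the empty word).

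First I would record a concatenation rule for fibres: for a finite word $\underline j=(j_1,\dots,j_m)$ and an infinite address $(l_n)_{n\ge 1}\in I^{\infty}$,
\[
	\pi(j_1\dots j_m\, l_1 l_2\dots)=f_{\underline j}\bigl(\pi(l_1 l_2\dots)\bigr).
\]
This rests on the elementary fact that a continuous self-map $g$ of $X$ satisfies $g\bigl(\bigcap_n K_n\bigr)=\bigcap_n g(K_n)$ for any decreasing sequence $K_1\supseteq K_2\supseteq\cdots$ of nonempty compact sets: apply it with $g=f_{\underline j}$ and $K_n=f_{l_1}\circ\dots\circ f_{l_n}(A_*)$, noting that the sets appearing in \eqref{eq:fiber} are decreasing (since $A_*$ is invariant), so the finite prefix can be pulled out of the intersection. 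Taking $(l_n)=(i_n)$, the given singleton address, the rule yields that for every finite word $\underline j$ the set $\pi(j_1\dots j_m\, i_1 i_2\dots)=\{f_{\underline j}(a_*)\}$ is again a singleton fibre (for the empty word this is just the hypothesis).

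Next I would prove that $D:=\{f_{\underline j}(a_*):\underline j\text{ a finite word over }I\}$ is dense in $A_*$. Since $a_*\in A_*$ and $F(A_*)=A_*$, we have $D\subseteq A_*$, hence $\overline D\subseteq A_*$, and $\overline D$ is closed with $F(\overline D)\subseteq\overline D$: indeed $f_i(\overline D)\subseteq\overline{f_i(D)}\subseteq\overline D$ for each $i\in I$ by continuity, so $F(\overline D)=\cl{\bigcup_{i\in I}f_i(\overline D)}\subseteq\overline D$. By Theorem~\ref{th:relationsbetweenattractors}(ii), a compact semiattractor is contained in every nonempty closed set $C$ with $F(C)\subseteq C$; applied to $C=\overline D$ this gives $A_*\subseteq\overline D$, so $\overline D=A_*$. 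If instead $A_*$ is a strict attractor, I would first replace $\mathcal F$ by its restriction to the basin $\mathcal B(A_*)$, where $A_*$ is a compact semiattractor by Theorem~\ref{th:relationsbetweenattractors}(iii); all the sets in the argument lie in the compact invariant set $A_*\subseteq\mathcal B(A_*)$, so closures and the Hutchinson operator are computed identically in $X$ and in $\mathcal B(A_*)$, and the same conclusion follows.

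Finally I would conclude: given $a\in A_*$ and an open $V\ni a$, pick $\varepsilon>0$ with the $d$-ball $B(a,\varepsilon)\subseteq V$; by density of $D$ there is a finite word $\underline j$ with $f_{\underline j}(a_*)\in B(a,\varepsilon)\subseteq V$, and then the fibre with address $j_1\dots j_m\, i_1 i_2\dots$ equals $\{f_{\underline j}(a_*)\}\subseteq V$, which is precisely the defining property of strong-fibredness. I expect the density step to be the only point requiring real care, since it is where the minimality ("smallest invariant set") content of Theorem~\ref{th:relationsbetweenattractors} enters; the concatenation rule and the concluding step are routine.
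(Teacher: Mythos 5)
Your argument is correct, and its skeleton coincides with the paper's: both proofs reduce the claim to finding, for a given $a\in A_*$ and open $V\ni a$, a finite word $\underline j$ with $f_{\underline j}(a_*)\in V$, and then both invoke the same concatenation identity $\pi(j_1\dots j_m\,i_1i_2\dots)=f_{\underline j}\bigl(\pi(i_1i_2\dots)\bigr)=\{f_{\underline j}(a_*)\}$, justified exactly as you do via the nested-compacta fact. The one genuine difference is how the word $\underline j$ is produced. The paper uses the dynamical statement $F^m(a_*)\to A_*$ in the Hausdorff metric (obtained from Theorem \ref{th:relationsbetweenattractors}(iv) in the semiattractor case), so that $F^m(a_*)\cap V\neq\emptyset$ for large $m$; you instead use the order-theoretic minimality of the semiattractor, Theorem \ref{th:relationsbetweenattractors}(ii), applied to the closed forward orbit $\overline D$ of $a_*$, to conclude $\overline D=A_*$. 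Your route makes the density of the singleton fibres explicit and avoids any appeal to Hausdorff convergence, which fits well with the paper's remark that the criterion holds for general topological spaces (indeed your final step does not really need the metric ball --- density of $D$ in $A_*$ already gives a point of $D$ in $V$); the paper's route is shorter because the convergence $F^m(a_*)\to A_*$ packages the density statement in one line. Both correctly handle the strict-attractor case by passing through the restriction results of Theorem \ref{th:relationsbetweenattractors}, just via different items ((iii) for you, (iv) for the paper).
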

\begin{proof}
Fix $a\in A_{*}$ and open $V\ni a$. 
Recall that $F^m(a_{*}) \to A_{*}$ when $m\to\infty$
(using Theorem \ref{th:relationsbetweenattractors} (iv), 
when $A_*$ is a semiattractor).
Hence $F^m(a_{*})\cap V\neq\emptyset$ 
for sufficiently large $m\geq 1$.
Observe that 
\begin{align*}
	F^m(a_{*}) = 
	{\{f_{j_1}\circ\dots\circ f_{j_m}(a_{*}) : 
	j_1,\dots,j_m\in I\}}.
\end{align*}
Therefore there exist $j_1,\dots,j_m\in I$ such that 
$f_{j_1}\circ\dots\circ f_{j_m}(a_{*}) \in V$.
Finally, we arrive at
\begin{align}
	\pi(j_{1}\dots j_{m}i_{1}i_{2}\dots) 
	&=
	\bigcap_{n=1}^{\infty} 
	f_{j_1}\circ\dots\circ f_{j_m}\circ 
	f_{i_1}\circ\dots\circ f_{i_n}(A_{*})  
	\label{eq:intersection1}
	\\
	&=
	f_{j_1}\circ\dots\circ f_{j_m}
	\left(\bigcap_{n=1}^{\infty} 
	f_{i_1}\circ\dots\circ f_{i_n}(A_{*})\right)  
	\label{eq:intersection2}
	\\
	&=
	f_{j_1}\circ\dots\circ f_{j_m}(\{a_{*}\}) 
	\subseteq V. 
	\nonumber
\end{align}
To pass from \eqref{eq:intersection1} 
to \eqref{eq:intersection2}, 
we note that the sets are nested and compact.
The proof is complete as $V$ was 
an arbitrary open set with 
$V\cap A_{*}\neq \emptyset$.
\end{proof}

Note that the above criterion is valid 
for a general topological space $X$. 
It could be deduced directly from  
\cite[Proposition 4.3.16]{Kieninger}
in the case where the singleton fibre 
$\pi(i_{1}i_{2}\dots)$ has a constant address,
$i_{1}=i_{2}=\dots$. 
For a metric space $X$ the reverse criterion is true and is due  to E. Matias (see 
\cite[Proposition 4.1, Acknowledgements]{BarrientosEtAl2020}): 
If $A_{*}$ is an invariant set 
which is strongly-fibred, then 
it admits at least one singleton fibre.

\section{Strongly-fibred triangle}

In this section an IFS which was introduced 
in \cite{BV2013} will be analysed 
from the perspective of the Lasota--Myjak theory 
as discussed in Section \ref{sec:semiattractors}.
		
Let $\Delta\subseteq \R^2$ be the filled triangle 
with vertices $(0,0),(1,0)$ and $(0,1)$
on the Euclidean plane. 
Let $f_i:\R^2\to\R^2$, 
$f_i(x,y) = A_i 
\left(\begin{smallmatrix}
x \\ y
\end{smallmatrix}\right) 
+ \left(\begin{smallmatrix}
0 \\ i-1
\end{smallmatrix}\right)$ 
for $(x,y)\in\R^2$, $i=1,2$, where 
\begin{align*}
	A_1=\left(\begin{array}{rr}
	1&\frac{1}{2}\\
	0&\frac{1}{2}
	\end{array}\right), \quad 
	A_2=\left(\begin{array}{rr}
	0&\frac{1}{2}\\
	-1&-\frac{1}{2}
	\end{array}\right).
\end{align*}	
Clearly, $\Delta$ is an invariant set for $\mathcal{F}$, 
i.e., $\Delta=f_1(\Delta)\cup f_2(\Delta)$, 
see Figure \ref{fig:triangle}.
However, as it has been stated in \cite{BV2013}, 
the IFS $\mathcal{F} = (\R^2; f_1, f_2)$ 
does not admit a strict attractor. On the other hand, 
its restriction $\mathcal{F}_{\Delta} = (\Delta; f_1,f_2)$ 
has $\Delta$ as a strict attractor which is strongly-fibred.


\begin{figure}[h]
\begin{center}
\begin{tikzpicture}[line cap=round,line join=round,>=triangle 45,x=3cm,y=3cm]
\clip(-0.5,-0.2) rectangle (1.3,1.2);
\draw [line width=2.pt] (0.,0.)-- (1.,0.);
\draw [line width=2.pt] (1.,0.)-- (0.,1.);
\draw [line width=2.pt] (0.,1.)-- (0.,0.);
\draw [line width=2.pt] (0.5,0.5)-- (0.,0.);
\draw (-0.2,-0.05) node[anchor=north west] {$(0,0)$};
\draw (0.9,-0.05) node[anchor=north west] {$(1,0)$};
\draw (-0.2,1.2) node[anchor=north west] {$(0,1)$};
\draw (0.5,0.7) node[anchor=north west] {$\left(\frac{1}{2}, \frac{1}{2}\right)$};
\draw (0.3,0.3) node[anchor=north west] {$f_1(\Delta)$};
\draw (0.05,0.6) node[anchor=north west] {$f_2(\Delta)$};
\end{tikzpicture}
\end{center}
\caption{The action of $f_1$ and $f_2$ on 
the strongly-fibred triangle $\Delta$.
$f_1$ maps the vertices of $\Delta$ as follows:
$(0,0)\mapsto (0,0)$,
$(1,0)\mapsto (1,0)$,
$(0,1)\mapsto (1/2,1/2)$.
$f_2$ maps the vertices of $\Delta$ as follows:
$(0,0)\mapsto (0,1)$,
$(1,0)\mapsto (0,0)$,
$(0,1)\mapsto (1/2,1/2)$.
}
\label{fig:triangle}
\end{figure}
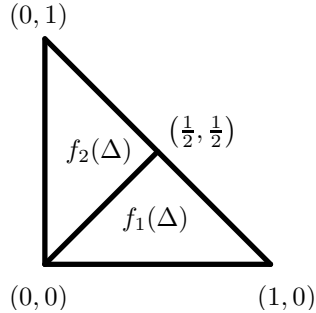

We will show much more. Namely, $\Delta$ is a strongly-fibred 
semiattractor of $\mathcal{F}$. 
(A fortiori $\Delta$ is the strict attractor 
of $\mathcal{F}_{\Delta}$, thanks to 
Theorem \ref{th:relationsbetweenattractors} (iv).)
Moreover, the normalized two-dimensional Lebesgue measure 
$\mathcal{L}$ restricted to $\Delta$, 
\begin{align*}
	\mu(B) = 
	\frac{\mathcal{L}(B\cap\Delta)}{\mathcal{L}(\Delta)} 
	= 2\mathcal{L}(B\cap \Delta)
	\mbox{ for Borel } B\subseteq \R^2, 
\end{align*}
is the unique invariant measure of the probabilistic IFS 
$\mathcal{F}p = (\R^2; (f_i, p_i) : i\in\{1,2\})$
with $p_1=p_2=\frac{1}{2}$.
	
First we show that $\mu$ is an invariant probability 
measure of $\mathcal{F}p$. Let $B$ be a Borel 
subset of $\R^2$. Let us write $B$ 
as $B=B_1\cup B_2\cup B_3$ where 
$B_1 \subseteq f_1(\Delta)$, 
$B_2 \subseteq f_2(\Delta)$ and 
$B_3 \subseteq \R^2\setminus \Delta$. 
Then we have
\begin{align*}
	f_i^{-1}(B_j)\subseteq 
	\R^2\setminus \inte{\Delta} 
	\quad  \text{for}\; 
	i\in \{1,2\}, j \in \{1,2,3\} 
	\;\text{and}\; i\neq j,
\end{align*}
where $\inte{}\!$ denotes the interior of a set. 

Hence for $i\in \{1,2\}$, 
$j \in \{1,2,3\}$ and $i\neq j$ we have
\begin{align*}
	0\leq \mu(f_i^{-1}(B_j))=
	2\mathcal{L} (f_i^{-1}(B_j)\cap\Delta)\leq 
	2\mathcal{L} ((\R^2\setminus \inte{\Delta})\cap\Delta)=0.
\end{align*}
We will now show that for $i=1,2$, 
\begin{align*}
	\mu (f_i^{-1}(B_i))=2\mu(B_i). 
\end{align*}
Since both maps $f_1$ and $f_2$ are affine linear 
with $\det A_1 =\det A_2= \frac{1}{2}$, we have
\begin{align*}
	\mu(f_i^{-1}(B_i))=
	2\mathcal{L} (f_i^{-1}(B_i)\cap \Delta)=
	2\mathcal{L} (f_i^{-1}(B_i))=
	4\mathcal{L}(B_i)=
	4\mathcal{L}(B_i\cap\Delta)=
	2\mu(B_i).
\end{align*}

We also note that $\mu(f_1(\Delta)\cap f_2(\Delta))=0$. 
This implies that
$\mu(f_j^{-1}f_i(\Delta))=0$ for $i\neq j$.

Taking into account the above observations, we have
\begin{align*}
	M \mu (B)
	& 
	=
	\frac{1}{2}\mu(f_1^{-1}(B))+\frac{1}{2}\mu(f_2^{-1}(B))
	\\
	\mbox{}
	&	
	= \frac{1}{2}\mu(f_1^{-1}(B_1)\cup f_1^{-1}(B_2)\cup f_1^{-1}(B_3))
	+ \frac{1}{2}\mu(f_2^{-1}(B_1)\cup f_2^{-1}(B_2)\cup f_2^{-1}(B_3))
	\\
	\mbox{}
	&
	= \frac{1}{2}\mu(f_1^{-1}(B_1))+\frac{1}{2}\mu( f_2^{-1}(B_2))
	\\
	\mbox{}
	&
	= \mu (B_1)+\mu(B_2)=\mu(B).
\end{align*}

So far we know that $\Delta$ is 
an invariant set of $\mathcal{F}$ and 
$\mu$ is an invariant probability measure of $\mathcal{F}p$. 
To prove that $\Delta$ is, indeed, 
a semiattractor of $\mathcal{F}$ 
and $\mu$ is unique, let us show that
$\mathcal{F}^{2}p=(\R^2; (f_i\circ f_j, 1/4) : i,j\in\{1,2\})$ 
is contractive on average.

In order to do this, let us compute 
the Lipschitz constants of the maps 
which comprise $\mathcal{F}^{2}p$. 
Using the maximum singular values of $A_i A_j$, we get
\begin{align*}
	\Lip(f_i\circ f_2)= \|A_i A_2\|_2= 
	\frac{1}{\sqrt{2}}, i=1,2, 
	\\
	\Lip(f_i\circ f_1)= \|A_i A_1\|_2= 
	\frac{\sqrt{3\sqrt{17}+13}\,}{4}, i=1,2.
\end{align*}
Hence we have
\begin{align*}
	\sum_{i,j=1}^{2} \Lip(f_i\circ f_j) < 4
\end{align*}
and so $\mathcal{F}^{2}p$ is contractive on average. Using the Lasota-Myjak criterion 
(Theorem \ref{th:criterion3}), we conclude that 
$\mu$ is a unique invariant probability measure
for $\mathcal{F}p$ and $\Delta =\supp\mu$ 
is a semiattractor of $\mathcal{F}$.

Let us note that for the average contractivity of 
$\mathcal{F}^{2}p$, it is enough to assume that
$p_1<\frac{4-2\sqrt{2}}{\sqrt{3\sqrt{17}+13}-2\sqrt{2}}
\approx 0.53$. So according to Theorem \ref{th:criterion3} and the fact that $\Delta$ is a semiattractor of $\mathcal{F}$, there is a unique invariant probability measure for $\mathcal{F}p$, where $p_1<0.53$, supported on $\Delta$. This gives rise to a natural question if such invariant measure is singular or absolutely continuous with respect to Lebesgue measure. One can also ask, given $p_1\in [0.53,1]$, does there exist a natural number $k$ such that $\mathcal{F}^{k}p$ is contractive on average?

Finally, we address the question 
how the semiattractor $\Delta$
is fibred by the IFS $\mathcal{F}$.
Let us look at two particular fibres, 
$\pi(i\overline{i})$, $i=1,2$,
where $\overline{i}$ is a constant 
infinite sequence of $i$'s.
By simple calculation
\begin{align*}
	A_1^n
	= 
	\left(\begin{array}{rr}
	1&\sum_{k=1}^{n}\frac{1}{2^k}\\
	0&\frac{1}{2^n}
	\end{array}\right)
	= 
	\left(\begin{array}{rr}
	1&1-2^{-n}\\
	0&2^{-n}
	\end{array}\right)
\end{align*}
we get that $f_1^{n}(\Delta)$ 
is the triangle with vertices 
$(0,0)$, $(1,0)$ and $(1-2^{-n},2^{-n})$,
from which it follows that
$\pi(1\overline{1}) = [0,1]\times \{0\}$.
Thus $\Delta$ is not point-fibred.
Next, since $f_2^2$ is a contraction, we get 
$\pi(2\overline{2}) = \{(\frac{1}{4},\frac{1}{2})\}$.
According to the Kieninger criterion 
(Theorem \ref{th:criterionK}) $\Delta$ 
is strongly-fibred by $\mathcal{F}$.
To complete the picture of fibre structure of $\Delta$,
let us note that all fibres are 
either points or intervals, 
see \cite[Figure 7]{BV2013}.
To see this, observe that each fibre
\begin{align*}
	\pi(i_{1}i_{2}\dots) = 
	\bigcap_{n=1}^{\infty} 
	f_{i_1}\circ\dots\circ f_{i_n}(\Delta)
\end{align*}
is an intersection of nested triangles
whose area is shrinked by factor $1/2$ 
downward the nest, i.e.,
\begin{align*}
	\mathcal{L}(f_{i_1}\circ\dots\circ f_{i_{n+1}}(\Delta))
	= \frac{1}{2}\cdot 
	\mathcal{L}(f_{i_1}\circ\dots\circ f_{i_{n}}(\Delta))
\end{align*}
Therefore, $\pi(i_{1}i_{2}\dots)$ is a nonempty,
compact and convex subset of the plane 
whose area is $0$, that is, 
either a point, or an interval.

The analysis of the above example 
was not straightforward because the IFS at hand 
was neither contractive on average
(\cite[Theorem 2.60]{Mendivil}) 
nor eventually contractive 
(\cite[Example 2.28]{Mendivil}). 
Fortunately, the Markov operator turned out 
to be eventually average-contractive.
The mere existence of the semiattactor 
could have been easily inferred from 
the contractivity of $f_2^2$ 
due to \cite[Theorem 3.2]{LasotaMyjak2000} 
(which is \cite[Theorem 6.3]{MyjakSzarek}). 
However, we would have not known 
if $\Delta$ is that semiattractor.

Techniques, similar to the ones 
we used in the above example, 
can be applied to other non-contractive affine IFSs. 
For instance, we have the following example.

\begin{example}
Let $\mathcal{F}p = (\R^2; (f_i, 1/2) : i\in\{1,2\})$,
where $f_1(x,y)=(x/2+1/2,y)$, $f_2(x,y)=(x,y/2)$ 
for $(x,y)\in\R^2$. We proceed as before. Noting that
\begin{align*}
	\Lip(f_i\circ f_i)=1,\; 
	\Lip(f_{3-i}\circ f_i)=1/2,\;
	i=1,2,
\end{align*}
we get that the Markov operator $M$ induced by 
$\mathcal{F}p$ is asymptotically stable.
Thus $\mathcal{F}p$ admits 
a unique invariant probability measure $\mu_{*}$
and a semiattractor $A_{*}$. 
It is easy to see that $A_{*}= \{(1,0)\}$, 
because the common fixed point of $f_1$ 
and $f_2$ is a minimal invariant set, 
and a fortiori $\mu_{*}= \delta_{(1,0)}$, 
the Dirac measure supported at the point $(1,0)$.
\end{example}


\end{document}